\documentclass[11pt]{article}
\usepackage[T1]{fontenc}
\usepackage{lmodern}
\usepackage[a4paper,margin=25mm]{geometry}
\usepackage{amsmath,amssymb,amsthm,mathtools}
\usepackage[dvipsnames]{xcolor}
\usepackage{booktabs,array,microtype,tikz}
\usetikzlibrary{arrows.meta}
\usepackage[colorlinks=true,linkcolor=MidnightBlue,citecolor=MidnightBlue,urlcolor=MidnightBlue]{hyperref}
\usepackage{fancyhdr}
\numberwithin{equation}{section}
\newtheorem{theorem}{Theorem}[section]
\newtheorem{lemma}[theorem]{Lemma}
\newtheorem{proposition}[theorem]{Proposition}
\newtheorem{corollary}[theorem]{Corollary}
\theoremstyle{definition}
\newtheorem{definition}[theorem]{Definition}

\newcommand{\one}{\mathbf 1}
\newcommand{\C}{\mathbb C}
\newcommand{\Spec}{\operatorname{Spec}}
\newcommand{\norm}[1]{\left\lVert #1\right\rVert}

\title{Aleshin graphs are expanders}
\author{Olga Kharlampovich and Alina Vdovina}
\date{20 October 2026}
\hypersetup{pdftitle={Aleshin graphs are expanders},pdfsubject={Spectral expansion and linear diameter bounds for Aleshin and Bellaterra graphs},pdfkeywords={Aleshin, Bellaterra, expander, Schreier graph, spectral gap, diameter, Chebyshev polynomial}}

\begin{document}
\maketitle
\begin{abstract}
Using the Bellaterra Ramanujan theorem \cite{Bell}, we prove that the
Aleshin level graphs are two-sided expanders and establish linear
diameter bounds for both graph families. At level $n$, the diameters
of the Bellaterra and undirected Aleshin graphs are at most $2n+2$,
and the directed Aleshin diameter is at most $12n$. Together with
elementary lower bounds, these estimates show that all three diameter
sequences have growth $\Theta(n)=\Theta(\log |V_n|)$, where
$|V_n|=2^n$. This improves the quadratic diameter bounds of Malyshev
and Pak to the correct order of growth. For the Aleshin graphs we
obtain a normalized spectral gap of at least $1-2\sqrt2/3$ and an
edge-expansion constant of at least $3-2\sqrt2$, thereby establishing
Malyshev--Pak's Conjecture~1.2 with explicit constants.
An exact calculation at level ten also proves that the undirected
Aleshin graphs are not Ramanujan for any $n\geq10$.
\end{abstract}

\section{The automata and their level graphs}

Malyshev and Pak proved that the diameters of the Aleshin and
Bellaterra level graphs grow at most quadratically in the level
\cite[Theorem~1.1]{MP}. They conjectured that both families are
two-sided expanders \cite[Conjectures~1.2 and~1.3]{MP}.
The connection between these questions is that uniform spectral
expansion gives logarithmic diameter in the number of vertices.
Here the number of vertices is $2^n$, so logarithmic diameter means
linear growth in $n$. We prove the expansion transfer first and then
give explicit linear diameter bounds in Section~\ref{sec:diameters}.
We note that Ramanujan graphs of valence $q+1$
with $q$ an odd prime power, generated by automata, were contructed by Bondarenko, Grigorchuk and Vdovina
in \cite[Theorem~2]{BGV}.

\subsection{Mealy automata and binary words}

Let $X=\{0,1\}$. The vertices of the rooted binary tree $X^*$ are
the finite words in $X$, including the empty word, which is the root.
The two children of a word $w$ are $w0$ and $w1$. The $n$th level is
$V_n=X^n$, so $|V_n|=2^n$.

A \emph{Mealy automaton} consists of a finite set of states, an
input alphabet, a next-state rule, and an output rule. Starting in a
state $s$, it reads a word from left to right. At each step it outputs
one letter and changes state according to the current state and the
input letter. Thus a state defines a length-preserving map on words.
If the output rule at each state permutes the alphabet, these maps
are automorphisms of the rooted tree. In particular, they permute
each finite level $V_n$.

For a rooted binary-tree automorphism $g$, the notation
$g=(g_0,g_1)\pi$ means
\[
 g(xw)=\pi(x)g_x(w)\qquad(x\in X).
\]
Here $\pi$ is the permutation of the first letter, and $g_x$ is the
\emph{section} of $g$ at $x$, namely the action on the remaining
word after the first letter $x$ has been read. Put $\sigma=(0\ 1)$.
The displayed formulas below are called \emph{wreath recursions}.
They specify each automaton completely.

\subsection{The Aleshin and Bellaterra automata}

We use the Aleshin convention of Vorobets--Vorobets \cite{VV}:
\begin{equation}\label{eq:aleshin}
 a=(c,b)\sigma,\qquad
 b=(b,c)\sigma,\qquad
 c=(a,a).
\end{equation}
The matching Bellaterra states are
\begin{equation}\label{eq:bellaterra}
 \beta_a=(\beta_c,\beta_b),\qquad
 \beta_b=(\beta_b,\beta_c),\qquad
 \beta_c=(\beta_a,\beta_a)\sigma.
\end{equation}
For example, $a(0w)=1c(w)$ and $a(1w)=0b(w)$, whereas
$\beta_a(0w)=0\beta_c(w)$ and $\beta_a(1w)=1\beta_b(w)$.
The next-state rules agree, and the output letters are opposite.
Figure~\ref{fig:automata} displays both automata. An arrow labelled
$x/y$ means: read $x$, output $y$, and move to the state at the
end of the arrow. Two labels on one arrow specify two transitions
with the same next state.

\begin{figure}[htbp]
\centering
\begin{minipage}[t]{0.48\textwidth}
\centering
\begin{tikzpicture}[
 >=Stealth,thick,
 state/.style={circle,draw,minimum size=8mm,inner sep=1pt},
 every edge/.style={draw,->},
 every node/.style={font=\small}]
 \node[state] (a) at (0,1.4) {$a$};
 \node[state] (b) at (2.5,1.4) {$b$};
 \node[state] (c) at (1.25,-0.8) {$c$};
 \path
 (a) edge node[above] {$1/0$} (b)
 (a) edge node[above right=1pt] {$0/1$} (c)
 (b) edge[loop right,min distance=12mm] node[right] {$0/1$} (b)
 (b) edge node[right=2pt] {$1/0$} (c)
 (c) edge[out=210,in=240,looseness=1.15]
       node[left=2pt,align=center] {$0/0$\\$1/1$} (a);
\end{tikzpicture}
\par\smallskip
\textup{(a) The Aleshin automaton.}
\end{minipage}\hfill
\begin{minipage}[t]{0.48\textwidth}
\centering
\begin{tikzpicture}[
 >=Stealth,thick,
 state/.style={circle,draw,minimum size=8mm,inner sep=1pt},
 every edge/.style={draw,->},
 every node/.style={font=\small}]
 \node[state] (a) at (0,1.4) {$\beta_a$};
 \node[state] (b) at (2.5,1.4) {$\beta_b$};
 \node[state] (c) at (1.25,-0.8) {$\beta_c$};
 \path
 (a) edge node[above] {$1/1$} (b)
 (a) edge node[above right=1pt] {$0/0$} (c)
 (b) edge[loop right,min distance=12mm] node[right] {$0/0$} (b)
 (b) edge node[right=2pt] {$1/1$} (c)
 (c) edge[out=210,in=240,looseness=1.15]
       node[left=2pt,align=center] {$0/1$\\$1/0$} (a);
\end{tikzpicture}
\par\smallskip
\textup{(b) The Bellaterra automaton.}
\end{minipage}
\caption{The two automata in the conventions
\eqref{eq:aleshin} and \eqref{eq:bellaterra}. The directed edges
record next states; each label gives the input letter followed by
the output letter.}
\label{fig:automata}
\end{figure}

As a short example of reading a word,
\[
 a(010)=1c(10)=11a(0)=111,
 \qquad
 \beta_a(010)=0\beta_c(10)=00\beta_a(0)=000.
\]
In each calculation the remaining input is processed from the state
specified by the preceding transition.

The Bellaterra states are involutions. Indeed, their squares satisfy
\[
 \beta_a^2=(\beta_c^2,\beta_b^2),\qquad
 \beta_b^2=(\beta_b^2,\beta_c^2),\qquad
 \beta_c^2=(\beta_a^2,\beta_a^2).
\]
All three squares fix the empty word. If they fix every word of
length $n$, these recursions show that they fix every word of
length $n+1$. Simultaneous induction proves
$\beta_a^2=\beta_b^2=\beta_c^2=1$ on the whole rooted tree.

\subsection{Level Schreier graphs and adjacency operators}

The directed Aleshin level graph has an edge $w\longrightarrow s(w)$
for each $w\in V_n$ and $s\in\{a,b,c\}$. Its indegree and outdegree
are three. Let $A_n$ denote its undirected version, retaining all
labelled edges and their multiplicities. Equivalently, $A_n$ is the
Schreier graph with the symmetric generating list
\[
                    a,b,c,a^{-1},b^{-1},c^{-1}.
\]
It is a degree-six multigraph. A loop contributes two to the degree;
parallel edges are counted separately.

For a permutation $s$ of $V_n$, define its column permutation matrix by
$P_s e_w=e_{s(w)}$, where $e_w$ is a standard basis vector.
Thus $P_sP_t=P_{st}$ and $P_s^*=P_{s^{-1}}$, with $*$ denoting
the conjugate transpose. Set
\begin{equation}\label{eq:adjacency}
 N_n=P_a+P_b+P_c,\qquad
 K_n=N_n+N_n^*,\qquad
 P_n=\frac{K_n}{6}.
\end{equation}
Here $K_n$ is the undirected adjacency matrix and $P_n$ is its
normalized random-walk operator. We use the Euclidean inner product
on $\C^{V_n}$ and write
\[
 H_n=\one^\perp
     =\left\{f\in\C^{V_n}:\sum_{w\in V_n}f(w)=0\right\}.
\]

The Bellaterra level graph $B_n$ has the same vertex set $V_n$ and
one edge labelled $s$ from $w$ to $\beta_s(w)$ for each
$s\in\{a,b,c\}$. Since the Bellaterra states are involutions, its
adjacency matrix is symmetric:
\begin{equation}\label{eq:M}
 M_n=P_{\beta_a}+P_{\beta_b}+P_{\beta_c}=M_n^*.
\end{equation}
A fixed point of a state contributes one to the corresponding
diagonal entry. These fixed points are sometimes represented by
half-loops; the matrix convention fixes their contribution
unambiguously. Every row sum of $M_n$ is three. All labels and
edge multiplicities are retained in both graph families.

\begin{definition}
A sequence of regular graphs is a family of \emph{two-sided spectral
expanders} if its normalized adjacency matrices satisfy
$\norm{P_n|_{H_n}}\leq1-\varepsilon$ for a constant
$\varepsilon>0$ independent of $n$.
\end{definition}

Since $P_n$ is self-adjoint, this bounds the absolute value of every
nonconstant normalized eigenvalue. In particular, it bounds the
eigenvalues away from both $1$ and $-1$.

\section{The exact Bellaterra comparison}
Before comparing the operators, we explain what the states in this
comparison represent. The names \emph{Aleshin} and \emph{Bellaterra}
refer to the two specific three-state binary Mealy automata in
Figure~\ref{fig:automata}. Their states are denoted by
\[
 \{a,b,c\}
 \qquad\text{and}\qquad
 \{\beta_a,\beta_b,\beta_c\},
\]
respectively. Each state specifies two instructions, one for input
$0$ and one for input $1$: which letter to output and which state
to use next. Choosing an initial state therefore determines how
the automaton transforms an entire input word.

For example, starting the Aleshin automaton in state $a$ means
that the first input letter is processed by the rule for $a$.
After that step, the remaining letters are processed in the states
specified by the transition rules. The notation $a(w)$ denotes
the complete output word obtained in this way. Thus the same
symbol $a$ denotes both an internal state and the transformation
of binary words defined by using that state initially. The
symbols $b,c$ and $\beta_a,\beta_b,\beta_c$ have the same
interpretation.

These transformations preserve word length and the prefix relation.
Since each state's output rule permutes $\{0,1\}$, they are
automorphisms of the rooted binary tree. They generate the groups
\[
 \mathcal A=\langle a,b,c\rangle
 \leq\operatorname{Aut}(X^*),
 \qquad
 \mathcal B=\langle\beta_a,\beta_b,\beta_c\rangle
 \leq\operatorname{Aut}(X^*).
\]
Products in these groups are compositions of transformations of
whole words: $(ab)(w)=a(b(w))$ means first run the automaton from
state $b$ on $w$, then run it from state $a$ on the resulting word.
Restricting a state transformation to $V_n=\{0,1\}^n$ gives a
permutation of the $2^n$ words at level $n$. We use the same symbol
for this restriction when the level is clear.

The correspondence $a\leftrightarrow\beta_a$,
$b\leftrightarrow\beta_b$, $c\leftrightarrow\beta_c$ can be read
directly from the following transition table. Each pair in the third
column gives the next Aleshin and Bellaterra states.

\begin{center}
\renewcommand{\arraystretch}{1.12}
\begin{tabular}{@{}ccccc@{}}
\toprule
Current states & Input & Next states
 & Aleshin output & Bellaterra output\\
\midrule
$a,\beta_a$ & $0$ & $c,\beta_c$ & $1$ & $0$\\
$a,\beta_a$ & $1$ & $b,\beta_b$ & $0$ & $1$\\
$b,\beta_b$ & $0$ & $b,\beta_b$ & $1$ & $0$\\
$b,\beta_b$ & $1$ & $c,\beta_c$ & $0$ & $1$\\
$c,\beta_c$ & $0$ & $a,\beta_a$ & $0$ & $1$\\
$c,\beta_c$ & $1$ & $a,\beta_a$ & $1$ & $0$\\
\bottomrule
\end{tabular}
\end{center}

In every row, the next states are again a corresponding pair,
and the two output letters are complementary. Consequently,
when the automata start in corresponding states and read the
same input word, they remain in corresponding states throughout
the calculation. For instance, on input $0010$ the Aleshin
states used are $a,c,a,b$, and
\[
 a(0010)=1c(010)=10a(10)=100b(0)=1001.
\]
The corresponding Bellaterra calculation is
\[
 \beta_a(0010)
 =0\beta_c(010)=01\beta_a(10)=011\beta_b(0)=0110.
\]
The output words differ at every position.

The vertices of the level graphs are the input words. Applying a
state transformation gives the endpoint of the edge with that label.
Thus $0010$ and $1001$ are joined by an $a$-labelled directed Aleshin
edge at level four, while the corresponding Bellaterra edge joins
$0010$ to $0110$. The permutation matrix $P_s$ records the action
of the Aleshin state $s$ by $P_se_w=e_{s(w)}$; the matrix
$P_{\beta_s}$ records the matching Bellaterra action. Here $e_w$
is the coordinate vector indexed by the word $w$. The next lemma
expresses the complementary-output relation as an identity of
permutations and then of matrices.

\subsection{The comparison of states}

Let $\delta_n$ complement every letter:
\[
 \delta_n(w_1\cdots w_n)=(1-w_1)\cdots(1-w_n),
 \qquad D_n=P_{\delta_n}.
\]
Then $D_n^*=D_n$, $D_n^2=I$, and $D_n$ fixes the constant vector.

\begin{lemma}[State comparison]\label{lem:comparison}
For every $n$ and $s\in\{a,b,c\}$,
\begin{equation}\label{eq:states}
                  s=\delta_n\beta_s\quad\text{on }V_n.
\end{equation}
Therefore
\begin{equation}\label{eq:operators}
              N_n=D_nM_n,\qquad
              N_n^*=M_nD_n,\qquad
              K_n=D_nM_n+M_nD_n.
\end{equation}
\end{lemma}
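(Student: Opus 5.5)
We prove \eqref{eq:states} by induction on $n$, simultaneously for all three states, using the wreath recursions \eqref{eq:aleshin} and \eqref{eq:bellaterra}. For $n=0$ both sides act on the single empty word and there is nothing to prove. Suppose $s=\delta_n\beta_s$ on $V_n$ for every $s\in\{a,b,c\}$. Take $xw\in V_{n+1}$ with $x\in X$, $w\in V_n$.

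The transition table shows two facts. First, in every row the Aleshin next state $s_x$ corresponds to the Bellaterra next state $(\beta_s)_x=\beta_{s_x}$. Second, the Aleshin output letter is the complement $1-y$ of the Bellaterra output letter $y$. Hence
\[
 s(xw)=(1-y)\,s_x(w)=(1-y)\,\delta_n\beta_{s_x}(w)=\delta_{n+1}\bigl(y\,\beta_{s_x}(w)\bigr)=\delta_{n+1}\beta_s(xw),
\]
where the second equality is the induction hypothesis applied to the state $s_x$. This completes the induction. The only point needing care is that the hypothesis must be applied to a possibly different state $s_x$, which is why the induction is run simultaneously over $\{a,b,c\}$. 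Equivalently, one can check the six rows of the table directly: for each state, $\pi_s=\sigma\pi_{\beta_s}$, since exactly one of the two root permutations is $\sigma$, and the sections correspond.

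The operator identities then follow formally. Since $P_{st}=P_sP_t$, we get $P_s=P_{\delta_n}P_{\beta_s}=D_nP_{\beta_s}$. Summing over $s$ gives $N_n=D_nM_n$. Taking adjoints and using $D_n^*=D_n$ and $M_n^*=M_n$ from \eqref{eq:M} gives $N_n^*=M_nD_n$. Adding the two yields $K_n=D_nM_n+M_nD_n$.

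I expect no real obstacle here. The only step requiring attention is bookkeeping in the inductive step: confirming that the complement of the first letter combines with $\delta_n$ on the suffix to give $\delta_{n+1}$.
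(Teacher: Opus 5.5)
Your proof is correct and matches the paper's argument: a simultaneous induction over the three states, using that corresponding states have corresponding next states and complementary output letters (the paper writes out the case of $a$ on input $0$), followed by the operator identities from $P_{st}=P_sP_t$ and the self-adjointness of $D_n$ and $M_n$.
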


\begin{proof}
The two automata have the same state transition for each input letter.
Their output letters are opposite for every state and input.
Starting in matching states and reading the same word, they therefore
follow the same sequence of states and output complementary words.
This proves \eqref{eq:states}. To see the induction explicitly,
for a word $w$ of length $n-1$ one has
\[
 \delta_n\beta_a(0w)
 =\delta_n\bigl(0\beta_c(w)\bigr)
 =1\delta_{n-1}\beta_c(w)
 =1c(w)=a(0w).
\]
The other states and input letters follow from the same recursions;
the initial case is the empty word. The identity is also
\cite[Lemma~5.1]{VV}. The matrix identities follow from
$P_{st}=P_sP_t$ and $M_n^*=M_n$, $D_n^*=D_n$.
\end{proof}

\subsection{General transfer of spectral gaps}\label{cmp:subsection}

The relation between the two automata transfers any two-sided
Bellaterra spectral bound to the Aleshin graphs. We give the precise
comparison.
Write
\[
 R_n=\frac{M_n}{3},\qquad Q_n=\frac{N_n}{3},\qquad
 P_n=\frac{K_n}{6}=\frac{Q_n+Q_n^*}{2}.
\]
Thus $R_n$ is the normalized Bellaterra adjacency operator, $Q_n$
describes the directed Aleshin walk, and $P_n$ describes the undirected
Aleshin walk. The normalizations use the row sums $3$ and $6$
specified in the graph definitions.

We work on
\[
 H_n=\left\{f:V_n\longrightarrow\mathbb C:
                  \sum_{v\in V_n}f(v)=0\right\},\qquad
 \langle f,g\rangle=\sum_{v\in V_n}\overline{f(v)}g(v).
\]
For a permutation $s$, our convention $P_se_v=e_{s(v)}$ gives
\[
 P_sP_t=P_{st},\qquad P_s^*=P_{s^{-1}}.
\]
Every permutation matrix is unitary and preserves both constants and
$H_n$, because permuting coordinates preserves their sum and their
squared absolute values. In particular, these subspaces are preserved
by the operators above and their adjoints.

\begin{proposition}\label{cmp:singular}
For $s,t\in\{a,b,c\}$, the corresponding automaton transformations
satisfy
\begin{equation}\label{cmp:products}
 \beta_s\beta_t=s^{-1}t.
\end{equation}
If $D_n$ is the permutation matrix of simultaneous bit complementation,
then
\begin{gather}
 Q_n=D_nR_n,\qquad Q_n^*=R_nD_n,
 \qquad P_n=\frac{D_nR_n+R_nD_n}{2},
 \label{cmp:factorization}\\
 Q_n^*Q_n=R_n^2,\qquad Q_nQ_n^*=D_nR_n^2D_n.
 \label{cmp:squares}
\end{gather}
Consequently, the singular values of $Q_n|_{H_n}$ are precisely the
absolute values of the eigenvalues of $R_n|_{H_n}$, with multiplicity.
In particular,
\begin{equation}\label{cmp:exact-norm}
 \|Q_n|_{H_n}\|=\|Q_n^*|_{H_n}\|=\|R_n|_{H_n}\|.
\end{equation}
\end{proposition}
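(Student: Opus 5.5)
The whole proposition follows from Lemma~\ref{lem:comparison} together with two elementary facts: $\delta_n$ is an involution, and the Bellaterra states are involutions. I would derive the group identity first, then the operator identities, and finally the singular-value statement.

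\textbf{The product identity.} By \eqref{eq:states} we have $s=\delta_n\beta_s$ on $V_n$. Since $\delta_n^2=1$ and $\beta_s^2=1$, this gives
\[
s^{-1}=\beta_s^{-1}\delta_n^{-1}=\beta_s\delta_n .
\]
Hence
\[
s^{-1}t=\beta_s\delta_n\delta_n\beta_t=\beta_s\beta_t .
\]
This holds on every level, so it holds on the whole tree, which proves \eqref{cmp:products}.

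\textbf{The factorizations.} Dividing \eqref{eq:operators} by $3$ gives $Q_n=D_nR_n$. Taking adjoints and using $R_n^*=R_n$, $D_n^*=D_n$ gives $Q_n^*=R_nD_n$. Averaging the two yields the formula for $P_n$ in \eqref{cmp:factorization}. For \eqref{cmp:squares}, use $D_n^2=I$:
\[
Q_n^*Q_n=R_nD_nD_nR_n=R_n^2,\qquad Q_nQ_n^*=D_nR_n^2D_n .
\]

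\textbf{The singular values.} All of $D_n$, $R_n$, $Q_n$, $Q_n^*$ preserve $H_n$, since they are combinations of permutation matrices. So these identities restrict to $H_n$, and $Q_n|_{H_n}$ is an operator on $H_n$ satisfying $(Q_n|_{H_n})^*(Q_n|_{H_n})=(R_n|_{H_n})^2$. Because $R_n|_{H_n}$ is self-adjoint, it is unitarily diagonalizable with real eigenvalues $\lambda_i$, counted with multiplicity. Then $(R_n|_{H_n})^2$ has eigenvalues $\lambda_i^2$ with the same multiplicities. The singular values of $Q_n|_{H_n}$ are by definition the square roots of the eigenvalues of $Q_n^*Q_n$ on $H_n$, namely $|\lambda_i|$ with multiplicity.

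\textbf{The norm equality.} The norm of an operator equals its largest singular value, so $\|Q_n|_{H_n}\|=\max_i|\lambda_i|=\|R_n|_{H_n}\|$; the last equality holds because $R_n|_{H_n}$ is self-adjoint. Finally, $\|Q_n^*|_{H_n}\|=\|Q_n|_{H_n}\|$, since an operator and its adjoint have equal norms. One can also see this directly: $Q_n^*=R_nD_n$ with $D_n$ unitary on $H_n$.

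\textbf{Main obstacle.} The only step needing care is the restriction to $H_n$. Both $D_n$ and $R_n$ must leave $H_n$ invariant, so that the restricted adjoint is the adjoint of the restriction and the multiplicity count is taken inside $H_n$ rather than on all of $\C^{V_n}$. Both invariances follow from the permutation-matrix remarks made before the proposition.
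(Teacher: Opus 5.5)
Your proposal is correct and follows essentially the same route as the paper: the involution identities give $s^{-1}t=\beta_s\beta_t$, summing $P_s=D_nP_{\beta_s}$ gives the factorizations, and an orthonormal eigenbasis of $R_n|_{H_n}$ identifies the singular values. The only cosmetic difference is in the norm equality. The paper also derives $\|Q_n|_{H_n}\|=\|R_n|_{H_n}\|$ directly from $\|D_nR_nf\|=\|R_nf\|$, whereas you read it off as the largest singular value; both are valid.
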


\begin{proof}
Let $\delta_n$ complement every letter of a word in $V_n$.
Lemma~\ref{lem:comparison} gives $s=\delta_n\beta_s$. The Bellaterra
states and $\delta_n$ are involutions. Hence
\[
 s^{-1}t=(\delta_n\beta_s)^{-1}(\delta_n\beta_t)
        =\beta_s\delta_n^2\beta_t
        =\beta_s\beta_t,
\]
which proves \eqref{cmp:products}. The identities are compatible with
restriction to every finite level, so they hold on the rooted tree
as well. Every product of two Bellaterra generators is thus a word
of length at most two in the Aleshin generators and their inverses.

The involution properties give
\[
 R_n^*=R_n,\qquad D_n^*=D_n,\qquad D_n^2=I.
\]
Summing $P_s=D_nP_{\beta_s}$ and dividing by $3$ proves
$Q_n=D_nR_n$. Taking the adjoint gives $Q_n^*=R_nD_n$, and averaging
these identities proves \eqref{cmp:factorization}. Multiplication
then gives \eqref{cmp:squares}. The first identity in
\eqref{cmp:squares} also follows directly from \eqref{cmp:products}:
\[
 Q_n^*Q_n
 =\frac19\sum_{s,t\in\{a,b,c\}}P_{s^{-1}t}
 =\frac19\sum_{s,t\in\{a,b,c\}}P_{\beta_s\beta_t}
 =R_n^2.
\]

The \emph{singular values} of an operator $T$ are the nonnegative
square roots of the eigenvalues of $T^*T$. All the operators in
\eqref{cmp:squares} preserve $H_n$, so the identities hold on that
subspace. Choose an orthonormal eigenbasis for the self-adjoint
operator $R_n|_{H_n}$. For an eigenvector $f$ with eigenvalue $\lambda$,
\[
 Q_n^*Q_nf=R_n^2f=\lambda^2f.
\]
The corresponding singular value is therefore $|\lambda|$, proving
the assertion with multiplicities.

For the norm equality one can also argue directly. The unitarity of
$D_n$ gives
\[
 \|Q_nf\|=\|D_nR_nf\|=\|R_nf\|\qquad(f\in H_n).
\]
Taking the supremum over unit vectors proves
$\|Q_n|_{H_n}\|=\|R_n|_{H_n}\|$. An operator and its adjoint have
the same norm. Since $H_n$ is preserved by both $Q_n$ and $Q_n^*$,
the adjoint of the restriction is the restriction of the adjoint.
This proves the remaining equality in \eqref{cmp:exact-norm}.
\end{proof}

For a self-adjoint normalized adjacency operator $T$, its
\emph{two-sided spectral gap} is
\[
 \varepsilon(T)=1-\|T|_{\mathbf1^\perp}\|.
\]
Thus a gap of at least $\varepsilon$ means that all eigenvalues on
the orthogonal complement of the constant functions lie in
$[-1+\varepsilon,1-\varepsilon]$. Both ends of the spectrum are
controlled by this definition.

\begin{proposition}
\label{cmp:transfer}
At every level,
\begin{equation}\label{cmp:norm-bound}
 \|P_n|_{H_n}\|\leq\|R_n|_{H_n}\|.
\end{equation}
Equivalently, the normalized gaps satisfy
\begin{equation}\label{cmp:gap-bound}
 \varepsilon_n^{\mathrm A}=1-\|P_n|_{H_n}\|
 \geq
 \varepsilon_n^{\mathrm B}=1-\|R_n|_{H_n}\|.
\end{equation}
In particular, a positive Bellaterra two-sided gap uniform in $n$
gives an Aleshin two-sided gap with the same normalized constant.
Whenever this gap is positive, the undirected Aleshin graph is
connected and nonbipartite.
\end{proposition}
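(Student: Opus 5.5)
The plan is to deduce \eqref{cmp:norm-bound} directly from the factorization \eqref{cmp:factorization} of Proposition~\ref{cmp:singular} together with the triangle inequality and unitarity of $D_n$. The two formulations \eqref{cmp:norm-bound} and \eqref{cmp:gap-bound} are trivially equivalent once the gap is written as one minus the restricted norm. The qualitative consequences about connectivity and bipartiteness are then read off from standard spectral facts.

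First, note that $D_n$, $R_n$ and $P_n$ all preserve $H_n$. Restricting $P_n=\tfrac12(D_nR_n+R_nD_n)$ to $H_n$ gives, for a unit vector $f\in H_n$,
\[
 \|P_nf\|\leq\tfrac12\bigl(\|D_nR_nf\|+\|R_nD_nf\|\bigr).
\]
For the first term, $\|D_nR_nf\|=\|R_nf\|\leq\|R_n|_{H_n}\|$ because $D_n$ is unitary. For the second term, $D_nf\in H_n$ is again a unit vector, so $\|R_nD_nf\|\leq\|R_n|_{H_n}\|$. Taking the supremum over $f$ proves \eqref{cmp:norm-bound}. Equivalently, one can use \eqref{cmp:exact-norm}: $P_n|_{H_n}=\tfrac12(Q_n+Q_n^*)|_{H_n}$ has norm at most the average of $\|Q_n|_{H_n}\|$ and $\|Q_n^*|_{H_n}\|$, both equal to $\|R_n|_{H_n}\|$. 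Subtracting from $1$ gives \eqref{cmp:gap-bound}. Hence a uniform bound $\varepsilon_n^{\mathrm B}\geq\varepsilon>0$ yields $\varepsilon_n^{\mathrm A}\geq\varepsilon$ with the same constant.

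For the final sentence, I would argue as follows. If $\|P_n|_{H_n}\|<1$, then $1$ is a simple eigenvalue of $P_n$ on $\C^{V_n}=\C\one\oplus H_n$. The indicator function of a connected component, minus its mean, would otherwise be a nonzero vector in $H_n$ fixed by $P_n$, so the graph is connected. Similarly, a bipartition $V_n=U\sqcup W$ with all edges (including loops, which contribute to the diagonal) crossing would give the vector $\one_U-\one_W$. This vector is orthogonal to $\one$ only when $|U|=|W|$; but for a regular graph with every edge crossing, counting edges gives $6|U|=6|W|$, so indeed $|U|=|W|$. Then $P_n(\one_U-\one_W)=-(\one_U-\one_W)$, contradicting $\|P_n|_{H_n}\|<1$. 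Thus the graph is nonbipartite.

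There is no real obstacle here. The only points needing care are that $H_n$ is invariant under $D_n$, so the second term is controlled, and the balanced-bipartition check in the nonbipartite argument, where loops are handled by noting that a loop already prevents the graph from being bipartite.
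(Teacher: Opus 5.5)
Your proof is correct and follows essentially the same route as the paper: the triangle inequality applied to $P_n=\tfrac12(Q_n+Q_n^*)=\tfrac12(D_nR_n+R_nD_n)$, together with the unitarity and $H_n$-invariance of $D_n$, followed by the standard eigenvector arguments for connectedness and nonbipartiteness. The only cosmetic difference is that you check $\one_U-\one_W\in H_n$ by counting edges ($6|U|=6|W|$), whereas the paper gets it from self-adjointness and $P_n\one=\one$ via $\langle\one,f\rangle=-\langle\one,f\rangle$; both arguments are valid.
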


\begin{proof}
Put $\rho_n=\|R_n|_{H_n}\|$. For $f\in H_n$, the triangle inequality
and Proposition~\ref{cmp:singular} give
\[
 \|P_nf\|
 =\left\|\frac{Q_nf+Q_n^*f}{2}\right\|
 \leq\frac{\|Q_nf\|+\|Q_n^*f\|}{2}
 \leq\rho_n\|f\|.
\]
Taking the supremum over unit vectors proves \eqref{cmp:norm-bound},
and subtracting from $1$ proves \eqref{cmp:gap-bound}.

There is also a useful proof directly in terms of the eigenvalues.
By \eqref{cmp:factorization} and our inner-product convention,
\[
 \langle f,P_nf\rangle
 =\operatorname{Re}\langle D_nf,R_nf\rangle.
\]
Cauchy--Schwarz and the unitarity of $D_n$ yield
\begin{equation}\label{cmp:quadratic}
 |\langle f,P_nf\rangle|
 \leq\|D_nf\|\,\|R_nf\|
 \leq\rho_n\|f\|^2\qquad(f\in H_n).
\end{equation}
Substituting an eigenvector of the self-adjoint operator $P_n|_{H_n}$
shows that
\begin{equation}\label{cmp:spectral-interval}
 \operatorname{Spec}(P_n|_{H_n})\subseteq[-\rho_n,\rho_n].
\end{equation}
If $\rho_n\leq1-\varepsilon$ uniformly, the same $\varepsilon$ therefore
works for every Aleshin level. In unnormalized terms, a Bellaterra
bound $|\lambda|\leq3-\eta$ gives the Aleshin bound
$|\lambda|\leq6-2\eta$ on $H_n$.

Suppose the undirected Aleshin graph had a component $S$ with
$0<|S|<|V_n|$. Then
\[
 f=\mathbf1_S-\frac{|S|}{|V_n|}\mathbf1
\]
would be a nonzero vector in $H_n$ with $P_nf=f$. This contradicts
\eqref{cmp:spectral-interval} when $\rho_n<1$, proving connectedness.
If the graph were bipartite, the sign function $f$ equal to $1$ and
$-1$ on the two parts would satisfy $P_nf=-f$. Self-adjointness and
$P_n\mathbf1=\mathbf1$ imply
\[
 \langle\mathbf1,f\rangle
 =\langle P_n\mathbf1,f\rangle
 =\langle\mathbf1,P_nf\rangle
 =-\langle\mathbf1,f\rangle,
\]
so $f\in H_n$, again contradicting \eqref{cmp:spectral-interval}.
\end{proof}

Conjectures~1.3 and~1.2 of Malyshev--Pak \cite{MP} concern two-sided
expansion of the Bellaterra and Aleshin graph families, respectively.
In \cite[Remark~11.3]{MP}, they observe that the first implies the
second by using products of two Bellaterra generators.
Identity~\eqref{cmp:products} describes those products explicitly,
and Proposition~\ref{cmp:transfer} gives the quantitative implication.
Their directed Aleshin graph records the three outgoing edges at
each vertex; forgetting the directions, with multiplicities retained,
gives the normalized operator $P_n$ used here. Their automaton
convention is obtained from ours by interchanging $0$ and $1$ in
both the input and output labels. This conjugates the level actions
by $\delta_n$ and gives isomorphic labelled level graphs.

The two-sided hypothesis is essential to this general argument.
For a self-adjoint $R_n$, the norm on $H_n$ is the maximum
\emph{absolute value} of its eigenvalues there; an upper bound on the
largest eigenvalue alone does not control eigenvalues near $-1$.
For example, on a two-point set let
\[
 R=D=\begin{pmatrix}0&1\\1&0\end{pmatrix},\qquad
 H=\operatorname{span}\{(1,-1)\}.
\]
The eigenvalue of $R|_H$ is $-1$, so its one-sided gap is $2$.
Nevertheless, $(DR+RD)/2=I$ has gap zero on $H$.
This example shows why the general comparison requires a bound at
the negative end of the spectrum as well.

The exact relation in \eqref{cmp:squares} identifies singular values,
and the eigenvalues themselves may differ. Already at level one,
\[
 R_1=\frac13\begin{pmatrix}2&1\\1&2\end{pmatrix},\qquad
 P_1=\frac13\begin{pmatrix}1&2\\2&1\end{pmatrix}.
\]
Their eigenvalues on $H_1=\operatorname{span}\{(1,-1)\}$ are
$1/3$ and $-1/3$, respectively, although both restricted norms are
$1/3$.

\subsection{The Bellaterra bound and Aleshin expansion}

We now apply Proposition~\ref{cmp:transfer} using the Bellaterra
spectral theorem from 
\cite[Theorem~2.3]{Bell}. 

\begin{theorem}[Bellaterra spectral bound]\label{thm:bellaterra}
For every $n\geq1$, the graph $B_n$ is connected, the eigenvalue $3$
of $M_n$ is simple, and
\begin{equation}\label{eq:input}
 \norm{M_n|_{H_n}}\leq2\sqrt2.
\end{equation}
\end{theorem}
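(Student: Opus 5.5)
This statement is imported from \cite[Theorem~2.3]{Bell}, so the plan is to reduce it to the Ramanujan theorem proved there and to verify that the conventions match; I do not intend to reprove the spectral bound from scratch. First I will check that the Bellaterra automaton \eqref{eq:bellaterra} and the multigraph convention for $M_n$ coincide with those of \cite{Bell}. Fixed points of an involution contribute $1$ to the diagonal, as half-loops, so $M_n$ is a symmetric $3$-regular operator. If \cite{Bell} uses the $0\leftrightarrow1$ relabelling, the level actions are conjugated by $\delta_n$. The resulting $M_n$ is then conjugate to ours by the unitary $D_n$, which fixes $\one$ and preserves $H_n$, so the spectrum on $H_n$ is unchanged.

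Next I will extract the bound. The theorem in \cite{Bell} states that every eigenvalue of $M_n$ other than the trivial eigenvalue $3$ has absolute value at most $2\sqrt2=2\sqrt{q}$ with $q=2$. Since $M_n$ is self-adjoint and preserves $H_n=\one^\perp$, the norm $\norm{M_n|_{H_n}}$ is the maximum absolute value of the eigenvalues on $H_n$, and this gives \eqref{eq:input}.

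Connectedness and simplicity then follow formally, as in the last paragraph of the proof of Proposition~\ref{cmp:transfer}. Suppose there were a component $S$ with $0<|S|<2^n$. Then $\one_S-\frac{|S|}{2^n}\one\in H_n$ would be an eigenvector of $M_n$ with eigenvalue $3>2\sqrt2$, contradicting \eqref{eq:input}. Any eigenvector for the eigenvalue $3$ decomposes orthogonally into a constant part and a part in $H_n$. The second part would be a $3$-eigenvector lying in $H_n$, so it must vanish, and the eigenvalue $3$ is simple.

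The main obstacle is the cited input itself: the bound $|\lambda|\le 2\sqrt2$ for all nontrivial eigenvalues, at both ends of the spectrum. If one wanted a self-contained route, I would try the recursive structure of $M_n$ in the wreath recursion. In the basis split by the first letter, $M_n$ is expressed through $M_{n-1}$ and the operators $P_{\beta_s}$. This suggests a Schur-complement or characteristic-polynomial recursion of the form $\chi_n(x)=\phi(x)\,\chi_{n-1}(\psi(x))$, whose roots are then located by backward iteration of the rational map $\psi$, in the spirit of Chebyshev-polynomial arguments. Bounding all iterated preimages inside $[-2\sqrt2,2\sqrt2]$ would be the hard part, and that is where I would rely on \cite{Bell}.
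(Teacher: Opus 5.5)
Your proposal is correct and follows the paper's proof. The paper checks that the states of \cite{Bell} are $\gamma_s=\delta_n\beta_s\delta_n$, so the $0\leftrightarrow1$ relabelling you anticipate does occur. It then writes $\widehat M_n=D_nM_nD_n$, transfers the bound $2\sqrt2$ through the unitary $D_n$, which preserves $H_n$, and deduces simplicity of $3$ and connectedness from $2\sqrt2<3$, just as you do.

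The only care needed is to read the cited estimate as a bound on $M_n|_{H_n}$, that is, on all eigenvalues other than one copy of $3$ counted with multiplicity. This is how the paper uses it, and with that reading your later deduction of simplicity is not circular.
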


\begin{proof}
Write $\gamma_a,\gamma_b,\gamma_c$ for the Bellaterra states in \cite{Bell}. They satisfy
\[
 \gamma_a=(\gamma_b,\gamma_c),\qquad
 \gamma_b=(\gamma_c,\gamma_b),\qquad
 \gamma_c=(\gamma_a,\gamma_a)\sigma.
\]
Conjugating the states in \eqref{eq:bellaterra} by letter
complementation interchanges the sections at $0$ and $1$ and leaves
the first-letter permutation unchanged. Thus the conjugated states
satisfy precisely these recursions. For example, if $w\in V_{n-1}$,
then
\[
 \begin{aligned}
 (\delta_n\beta_a\delta_n)(0w)
 &=\delta_n\beta_a(1\delta_{n-1}(w))\\
 &=\delta_n\bigl(1\beta_b(\delta_{n-1}(w))\bigr)\\
 &=0(\delta_{n-1}\beta_b\delta_{n-1})(w).
 \end{aligned}
\]
The other transitions follow in the same way. Starting with the
empty word, simultaneous induction on word length therefore gives
\[
          \gamma_s=\delta_n\beta_s\delta_n
          \quad\text{on }V_n,\qquad s\in\{a,b,c\}.
\]
Our paper \cite{Bell} uses the same edge-multiplicity and fixed-point
conventions as \eqref{eq:M}. Its adjacency operator is consequently
\[
 \widehat M_n=P_{\gamma_a}+P_{\gamma_b}+P_{\gamma_c}
             =D_nM_nD_n.
\]
Since $D_n$ is unitary and fixes the constant vector, it preserves
$H_n$. The estimate in \cite[Theorem~2.3]{Bell} therefore yields
\[
 \norm{M_n|_{H_n}}=\norm{\widehat M_n|_{H_n}}\leq2\sqrt2.
\]
Finally, $2\sqrt2<3$ excludes an eigenvector of eigenvalue $3$ in
$H_n$. The eigenvalue $3$ is simple, so $B_n$ is connected.
\end{proof}

\begin{samepage}
\begin{theorem}[Aleshin spectral expansion]\label{thm:expansion}
For every $n\geq1$,
\begin{equation}\label{eq:bound}
 \Spec(K_n|_{H_n})\subseteq[-4\sqrt2,4\sqrt2],
 \qquad \norm{P_n|_{H_n}}\leq\frac{2\sqrt2}{3}<1.
\end{equation}
Thus the degree-six Aleshin graphs $A_n$ are connected two-sided
expanders, with uniform normalized spectral gap at least
\begin{equation}\label{eq:gap}
 \varepsilon=1-\frac{2\sqrt2}{3}.
\end{equation}
\end{theorem}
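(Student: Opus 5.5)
The plan is to combine Theorem~\ref{thm:bellaterra} with the transfer inequality of Proposition~\ref{cmp:transfer}; nothing new about the automata is needed. First I normalize the Bellaterra input. Since $R_n=M_n/3$, the estimate \eqref{eq:input} gives
\[
 \rho_n=\norm{R_n|_{H_n}}=\tfrac13\norm{M_n|_{H_n}}\leq\tfrac{2\sqrt2}{3}.
\]
Then \eqref{cmp:norm-bound} yields $\norm{P_n|_{H_n}}\leq\rho_n\leq 2\sqrt2/3$, which is the second half of \eqref{eq:bound}. The bound $2\sqrt2/3<1$ holds because $8<9$.

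For the unnormalized spectral statement I use $K_n=6P_n$. The operator $K_n$ is self-adjoint and preserves $H_n$ (the paper notes that permutation matrices and their adjoints preserve $H_n$). Hence $\Spec(K_n|_{H_n})$ consists of real numbers of absolute value at most $\norm{K_n|_{H_n}}=6\norm{P_n|_{H_n}}\leq 4\sqrt2$, which gives the interval $[-4\sqrt2,4\sqrt2]$. Equivalently, I can quote the remark after \eqref{cmp:spectral-interval}: a Bellaterra bound $|\lambda|\leq 3-\eta$ with $\eta=3-2\sqrt2$ gives the Aleshin bound $|\lambda|\leq 6-2\eta=4\sqrt2$.

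Finally, since $\rho_n<1$, the last paragraph of Proposition~\ref{cmp:transfer} shows that each $A_n$ is connected and nonbipartite. The uniform gap $\varepsilon=1-2\sqrt2/3>0$ is independent of $n$. By the Definition, this makes $(A_n)$ a family of two-sided spectral expanders with the gap stated in \eqref{eq:gap}.

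There is no real obstacle here. The only point to check is that the normalization convention used for $M_n$ (fixed points counted once, row sum $3$) matches the one in the cited theorem of \cite{Bell}. That matching was already settled inside the proof of Theorem~\ref{thm:bellaterra}, so the argument reduces to bookkeeping of the factors $3$ and $6$.
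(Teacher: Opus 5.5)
Your proposal is correct and matches the paper's proof: normalize \eqref{eq:input} to $\norm{R_n|_{H_n}}\leq 2\sqrt2/3$, apply Proposition~\ref{cmp:transfer}, and scale by $K_n=6P_n$. Connectedness then follows from the same proposition. Your extra remarks on self-adjointness and nonbipartiteness are correct additions that the paper's short proof leaves implicit.
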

\end{samepage}

\begin{proof}
Theorem~\ref{thm:bellaterra} gives
$\norm{R_n|_{H_n}}\leq2\sqrt2/3$.
Proposition~\ref{cmp:transfer} therefore yields
$\norm{P_n|_{H_n}}\leq2\sqrt2/3<1$.
Since $K_n=6P_n$, its spectrum on $H_n$ lies in
$[-4\sqrt2,4\sqrt2]$. The same proposition proves connectedness,
and the normalized two-sided gap is at least
$1-2\sqrt2/3$, independently of $n$.
\end{proof}

The general comparison in Proposition~\ref{cmp:transfer} uses only
the automaton identities and elementary linear algebra. The
particular constants in Theorem~\ref{thm:expansion} use the
Bellaterra estimate \eqref{eq:input}. We next derive the edge
expansion and diameter bounds from these operators.

\section{Edge expansion and the directed walk}

For $S\subset V_n$, let $\partial S$ be the multiset of undirected
edges with one endpoint in $S$ and one outside $S$. Count each such
edge once, with its multiplicity; loops contribute nothing.

\begin{corollary}[Edge expansion]\label{cor:edge}
Under \eqref{eq:input}, every $S\subset V_n$ with
$0<|S|\leq2^{n-1}$ satisfies
\begin{equation}\label{eq:edge}
                    {|\partial S|\geq(3-2\sqrt2)|S|.}
\end{equation}
In particular, the edge-expansion constants obey
\[
 h(A_n):=\min_{0<|S|\leq|V_n|/2}\frac{|\partial S|}{|S|}
                              \geq3-2\sqrt2>0.
\]
\end{corollary}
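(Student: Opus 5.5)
The plan is the standard discrete Cheeger (expander mixing) argument, applied to the degree-six multigraph $A_n$ with adjacency matrix $K_n$ and driven by the bound $\lambda_2(K_n)\leq 4\sqrt2$ from Theorem~\ref{thm:expansion}. That theorem, or equivalently \eqref{eq:input} combined with Proposition~\ref{cmp:transfer}, shows that every eigenvalue of $K_n$ on $H_n$ is at most $4\sqrt2$. Only this upper end of the spectrum is needed.

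First I would express the boundary through the Laplacian $L=6I-K_n$. For a real vector $f$,
\[
 \langle f,Lf\rangle=\sum_{\{v,w\}}(f(v)-f(w))^2,
\]
where the sum runs over undirected edges with multiplicity, and loops contribute $0$. I must check that this identity holds with the paper's conventions. A loop from a fixed point of $s$ adds $1$ to the diagonal of $P_s$ and another $1$ via $P_s^*$, so it contributes $2$ to the diagonal of $K_n$, which matches the degree count. Parallel edges are counted through the matrix entries. So the identity is the usual one. For $f=\one_S$ it gives
\[
 \langle \one_S,L\one_S\rangle=|\partial S|.
\]

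Next I project onto $H_n$. Let $f=\one_S-\frac{|S|}{2^n}\one\in H_n$. Since $L\one=0$ and $L$ is self-adjoint, $\langle f,Lf\rangle=|\partial S|$. Using the Rayleigh quotient on $H_n$,
\[
 \langle f,Lf\rangle\geq(6-4\sqrt2)\|f\|^2,
 \qquad
 \|f\|^2=|S|\Bigl(1-\frac{|S|}{2^n}\Bigr)\geq\frac{|S|}{2},
\]
where the last inequality uses $|S|\leq2^{n-1}$. Combining these gives $|\partial S|\geq(3-2\sqrt2)|S|$. Taking the minimum over all admissible $S$ then gives the bound on $h(A_n)$.

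The main point to verify is the convention check for loops and multiplicities. The rest is routine. Positivity of the constant follows from $2\sqrt2<3$.
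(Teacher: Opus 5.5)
Your proposal is correct and follows essentially the same route as the paper. Both arguments project $\one_S$ to $f\in H_n$, use $\langle f,L_nf\rangle=|\partial S|$ with $L_n=6I-K_n$, bound this below by $(6-4\sqrt2)\|f\|^2$, and finish with $\|f\|^2=|S|(1-|S|/2^n)\geq|S|/2$; your loop and multiplicity check and your remark that only the upper end of the spectrum is needed are both accurate.
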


\begin{proof}
Put $m=2^n$, $s=|S|$, and
$f=\one_S-(s/m)\one$. Then $f\in H_n$ and
\[
                        \norm{f}^2=s\left(1-\frac{s}{m}\right).
\]
The graph Laplacian is $L_n=6I-K_n$, and $L_n\one=0$. The usual
quadratic-form identity, valid with the stated multiplicity convention,
gives
\[
 |\partial S|
 =\langle\one_S,L_n\one_S\rangle
 =\langle f,L_nf\rangle
 =\frac12\sum_{u,v\in V_n}(K_n)_{uv}|f(u)-f(v)|^2.
\]
More generally, if $\norm{P_n|_{H_n}}\leq1-\varepsilon$,
then $L_n=6(I-P_n)$ gives
\begin{align*}
 |\partial S|
 &\geq6\varepsilon\norm{f}^2\\
 &=6\varepsilon s\left(1-\frac{s}{m}\right)
 \geq3\varepsilon s,
\end{align*}
where the last step uses $s\leq m/2$. By
Theorem~\ref{thm:expansion}, we may take
$\varepsilon=1-2\sqrt2/3$. This gives the stated constant
$3\varepsilon=3-2\sqrt2$.
\end{proof}

For completeness, the positive-generator directed walk also has
a uniform contraction estimate. With our column convention,
$Q_n=N_n/3$ acts on column probability distributions, while its adjoint
acts on observables by
\[
             (Q_n^*f)(w)=\frac{f(a(w))+f(b(w))+f(c(w))}{3}.
\]
Both preserve the uniform distribution and the mean-zero subspace.
Proposition~\ref{cmp:singular} gives
\begin{equation}\label{eq:directed}
       \norm{Q_n|_{H_n}}
        =\norm{Q_n^*|_{H_n}}
        =\norm{(M_n/3)|_{H_n}}
        \leq\frac{2\sqrt2}{3}.
\end{equation}
Thus the directed walk contracts mean-zero vectors by the same
uniform factor in a single step. For every integer $t\geq0$,
submultiplicativity of the operator norm gives
\[
 \norm{Q_n^t|_{H_n}}
 \leq\norm{Q_n|_{H_n}}^t
 \leq\left(\frac{2\sqrt2}{3}\right)^t.
\]
This argument does not require $Q_n$ to be self-adjoint or
diagonalizable. For instance, if $\mu$ is a probability
column vector and $u=m^{-1}\one$, then
\[
           \norm{Q_n^t\mu-u}
                 \leq\left(\frac{2\sqrt2}{3}\right)^t\norm{\mu-u}.
\]
We use this contraction estimate for directed paths in the next section.

\section{Linear diameter bounds for both families}\label{sec:diameters}

The \emph{distance} between two vertices of an undirected graph is the
minimum number of edges in a path joining them. The \emph{diameter}
is the maximum of these distances. For a directed graph, paths must
follow the orientations; the \emph{directed diameter} is the maximum
over ordered pairs of vertices. We write
$\operatorname{diam}_{\mathrm{dir}}(A_n)$ for the directed Aleshin
diameter, using only the transitions $a,b,c$.

Malyshev--Pak's quadratic diameter bounds
\cite[Theorem~1.1]{MP} concern undirected distances. We obtain linear
bounds for those distances and also for directed Aleshin distances.
The argument for undirected graphs uses the following elementary
polynomial estimate.

\subsection{A polynomial bound for the diameter}

The Chebyshev polynomials $T_j$ are defined by
\[
 T_0(x)=1,\qquad T_1(x)=x,\qquad
 T_{j+1}(x)=2xT_j(x)-T_{j-1}(x).
\]
They have degree $j$ and satisfy
\begin{equation}\label{eq:chebyshev}
 T_j(\cos\theta)=\cos(j\theta),\qquad
 T_j(\cosh u)=\cosh(ju).
\end{equation}
These identities follow by induction from the corresponding addition
formulas. In particular, $|T_j(x)|\leq1$ when $-1\leq x\leq1$.

\begin{lemma}[Diameter from a spectral interval]\label{lem:polydiameter}
Let $P$ be the symmetric normalized adjacency matrix of a regular
graph on $N\geq2$ vertices, with loops and multiplicities allowed.
Suppose that
\[
              \norm{P|_{\one^\perp}}\leq\rho,
                       \qquad 0<\rho<1.
\]
If an integer $t\geq1$ satisfies
\begin{equation}\label{eq:chebcriterion}
                         T_t(1/\rho)>N-1,
\end{equation}
then the graph has diameter at most $t$.
\end{lemma}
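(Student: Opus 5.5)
The plan is the standard Chung-type polynomial argument. Put $u=N^{-1}\one$ and $J=\one\one^*/N$, and consider the rescaled Chebyshev polynomial
\[
 p(x)=\frac{T_t(x/\rho)}{T_t(1/\rho)},
\]
which has degree $t$ and satisfies $p(1)=1$. For vertices $v,w$, I will show that the entry $\langle e_v,p(P)e_w\rangle$ is strictly positive. Since $p$ has degree $t$ and every entry of $P^j$ is nonnegative, a positive entry forces some $P^j$ with $j\leq t$ to have a positive $(v,w)$ entry. That means there is a walk of length at most $t$ from $w$ to $v$, so the distance is at most $t$. The positivity step is the only real content; the rest is bookkeeping.

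To prove positivity, decompose $p(P)=p(1)J+p(P)(I-J)$. This is valid because $P$ is self-adjoint, fixes $\one$, and preserves $\one^\perp$. Hence
\[
 \langle e_v,p(P)e_w\rangle=\frac1N+\bigl\langle e_v-u,\;p(P)(e_w-u)\bigr\rangle.
\]
The spectrum of $P|_{\one^\perp}$ lies in $[-\rho,\rho]$, and $|T_t|\leq1$ on $[-1,1]$ by \eqref{eq:chebyshev}. Together these give
\[
 \norm{p(P)|_{\one^\perp}}\leq\frac{1}{T_t(1/\rho)}.
\]
The vectors $e_v-u$ and $e_w-u$ lie in $\one^\perp$ and each has squared norm $1-1/N$. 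Cauchy--Schwarz therefore bounds the error term in absolute value by $(1-1/N)/T_t(1/\rho)$.

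It remains to compare this with $1/N$. The entry is positive as soon as $T_t(1/\rho)>(1-1/N)N=N-1$, which is exactly \eqref{eq:chebcriterion}. Note that $T_t(1/\rho)>0$, since $1/\rho>1$ and $T_t(\cosh u)=\cosh(tu)\geq1$; so $p$ is well defined.

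The main obstacle is just keeping the multigraph conventions straight: loops, multiplicities, and the $1/2$-loops from fixed points all enter $P$ as nonnegative entries, so "positive $(v,w)$ entry of $P^j$" still means a genuine walk. I also need $v\neq w$ to matter only trivially, since distance $0$ is at most $t$ anyway. No nonnegativity of the coefficients of $p$ is required, because positivity of the total sum already forces at least one term $P^j$ to be nonzero in that entry.
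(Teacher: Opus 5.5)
Your proposal is correct and follows the paper's proof essentially step for step. The paper works with $R=T_t(P/\rho)-T_t(1/\rho)J$ instead of your normalized $p(P)-J$. It bounds $|R_{uv}|$ by Cauchy--Schwarz with $f_v=e_v-N^{-1}\one$, which has squared norm $1-1/N$, and then argues as you do that a positive entry of a degree-$t$ polynomial in $P$ forces a walk of length at most $t$.
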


\begin{proof}
Set
\[
 J=\frac1N\one\one^{\mathsf T},\qquad
 a_t=T_t(1/\rho),\qquad
 R=T_t(P/\rho)-a_tJ.
\]
The operator $R$ vanishes on constants. On $\one^\perp$, the spectrum
of $P/\rho$ lies in $[-1,1]$. The spectral theorem and
\eqref{eq:chebyshev} therefore give $\norm{R}\leq1$.

For a vertex $v$, put $f_v=e_v-N^{-1}\one$. Then
$\norm{f_v}^2=1-N^{-1}$. Since $R$ vanishes on constants and is
self-adjoint, the Cauchy--Schwarz inequality gives
\[
 |R_{uv}|=|\langle f_u,Rf_v\rangle|
                  \leq\norm{f_u}\norm{f_v}=1-\frac1N.
\]
Consequently
\begin{equation}\label{eq:positivepoly}
 \bigl(T_t(P/\rho)\bigr)_{uv}
       \geq\frac{a_t}{N}-\left(1-\frac1N\right)
       =\frac{a_t-(N-1)}{N}>0.
\end{equation}

If $u,v$ had distance greater than $t$, every entry
$(P^j)_{uv}$ with $0\leq j\leq t$ would be zero, because there is
no walk of any such length joining them. As $T_t(P/\rho)$ is a
polynomial in $P$ of degree $t$, its $uv$-entry would also be zero.
This contradicts \eqref{eq:positivepoly}. Thus every pair of vertices
has distance at most $t$.
\end{proof}

\subsection{The Bellaterra and undirected Aleshin bounds}

\begin{theorem}[Linear undirected diameter]\label{thm:diameter}
The Bellaterra estimate \eqref{eq:input} gives, for every $n\geq1$,
\begin{equation}\label{eq:lineardiameter}
 {\operatorname{diam}(B_n)\leq2n+2,\qquad
        \operatorname{diam}(A_n)\leq2n+2.}
\end{equation}
\end{theorem}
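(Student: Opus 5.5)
Both bounds follow from Lemma~\ref{lem:polydiameter} with $N=2^n$. For $B_n$ we take $P=R_n=M_n/3$ and $\rho=2\sqrt2/3$, which is admissible by \eqref{eq:input}. For the undirected Aleshin graph $A_n$ we take $P=P_n$ with the same $\rho$, which is admissible by Theorem~\ref{thm:expansion}. In both cases the matrix is the symmetric normalized adjacency matrix of a regular multigraph, as the lemma requires. It therefore suffices to check the scalar inequality
\[
 T_{2n+2}\!\left(\frac{3}{2\sqrt2}\right)>2^n-1
\]
for every $n\geq1$.

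To check it, we write $1/\rho=3/(2\sqrt2)=\cosh u$ and use \eqref{eq:chebyshev}. We have $e^{u}=\cosh u+\sinh u$, and
\[
 \sinh u=\sqrt{\tfrac98-1}=\tfrac{1}{2\sqrt2},
\]
so
\[
 e^u=\frac{3}{2\sqrt2}+\frac1{2\sqrt2}=\frac{4}{2\sqrt2}=\sqrt2 .
\]
Hence
\[
 T_t(1/\rho)=\cosh(tu)=\frac{2^{t/2}+2^{-t/2}}{2}.
\]
For $t=2n+2$ this equals
\[
 \frac{2^{n+1}+2^{-n-1}}{2}=2^n+2^{-n-2}>2^n-1,
\]
so criterion \eqref{eq:chebcriterion} holds and both diameters are at most $2n+2$.

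The only real work is verifying that $e^u=\sqrt2$ exactly. The identity $\cosh u=\tfrac12(\sqrt2+1/\sqrt2)=3/(2\sqrt2)$ confirms it. Once this is in hand the bound is tight in form: $t=2n$ gives $2^{n-1}+2^{-n-1}$, which is not greater than $2^n-1$ for $n\geq2$. This explains the extra $+2$. One could also note that $t=2n+1$ already works, since $\cosh((2n+1)u)=2^{n-1/2}+2^{-n-3/2}$ and $2^{n-1/2}>2^n-1$ exactly when $2^n(1-1/\sqrt2)<1$. That fails for large $n$, so $2n+2$ is the clean uniform choice.
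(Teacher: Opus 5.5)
Your argument is correct and is the same as the paper's proof. Like the paper, you apply Lemma~\ref{lem:polydiameter} with $\rho=2\sqrt2/3$ and $N=2^n$ to both $M_n/3$ and $K_n/6$, and use $1/\rho=\cosh(\log\sqrt2)$ to get $T_{2n+2}(1/\rho)=2^n+2^{-n-2}>2^n-1$. Your closing aside is inconsistent, though this does not affect the proof: $t=2n+1$ satisfies the criterion only for $n=1$, because $2^n(1-1/\sqrt2)<1$ forces $n\le1$, so it does not ``already work'' in general.
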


\begin{proof}
For the two families, the symmetric normalized adjacency operators are
\[
              P_n^{\mathrm B}=\frac{M_n}{3},\qquad
              P_n^{\mathrm A}=\frac{K_n}{6}.
\]
Equation~\eqref{eq:input} and Theorem~\ref{thm:expansion} give the
same bound for both operators:
\[
 \norm{P_n^{\mathrm B}|_{H_n}}\leq\rho,
 \qquad \norm{P_n^{\mathrm A}|_{H_n}}\leq\rho,
 \qquad \rho=\frac{2\sqrt2}{3}.
\]
There are $N=2^n$ vertices. Since
\[
                 \frac1\rho=\frac{3}{2\sqrt2}
                                  =\cosh(\log\sqrt2),
\]
the second identity in \eqref{eq:chebyshev} gives
\[
                   T_t(1/\rho)=\frac{2^{t/2}+2^{-t/2}}2.
\]
In particular,
\[
                 T_{2n+2}(1/\rho)
                      =N+\frac1{4N}>N-1.
\]
Apply Lemma~\ref{lem:polydiameter} with $t=2n+2$ to each operator.
\end{proof}

\subsection{Directed Aleshin paths}

\begin{proposition}[Linear directed diameter]\label{prop:directeddiameter}
The Bellaterra estimate \eqref{eq:input} gives, for every $n\geq1$,
\begin{equation}\label{eq:directeddiameter}
                  {\operatorname{diam}_{\mathrm{dir}}(A_n)
                         \leq12n.}
\end{equation}
\end{proposition}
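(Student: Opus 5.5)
The plan is to imitate the proof of Lemma~\ref{lem:polydiameter}, but with the non-self-adjoint directed operator $Q_n=N_n/3$. Instead of Chebyshev polynomials I will use the plain powers $Q_n^t$ together with the one-step contraction \eqref{eq:directed}. The key observation is that, with the column convention $P_se_w=e_{s(w)}$, the entry $(Q_n^t)_{uv}$ is $3^{-t}$ times the number of words $s_1\cdots s_t$ in the positive generators $a,b,c$ with $s_1\cdots s_t(v)=u$. Hence $(Q_n^t)_{uv}>0$ means there is a directed path of length exactly $t$ from $v$ to $u$. So it suffices to find one $t$, of size $12n$, for which every entry of $Q_n^t$ is positive.

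To bound the entries, put $N=2^n$, $\rho=2\sqrt2/3$, and $f_w=e_w-N^{-1}\one\in H_n$ as before. The operator $Q_n$ fixes $\one$, so $Q_n^te_v=N^{-1}\one+Q_n^tf_v$. Also $\langle e_u,Q_n^t f_v\rangle=\langle f_u,Q_n^tf_v\rangle$, because $Q_n^tf_v\in H_n$ is orthogonal to $\one$. The bound $\norm{Q_n^t|_{H_n}}\leq\rho^t$ from \eqref{eq:directed} and Cauchy--Schwarz then give
\[
 (Q_n^t)_{uv}\geq\frac1N-\rho^t\Bigl(1-\frac1N\Bigr)=\frac{1-\rho^t(N-1)}{N}.
\]
This is positive as soon as $\rho^t\leq 2^{-n}$. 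No self-adjointness or diagonalizability is needed, only submultiplicativity of the norm on the invariant subspace $H_n$.

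The only remaining step is the numerical check, which is where the constant $12$ comes from. Taking $t=12n$ gives $\rho^{12n}=(8/9)^{6n}$. Since $(8/9)^6=262144/531441<1/2$, we get $\rho^{12n}<2^{-n}$, so every entry of $Q_n^{12n}$ is positive. Therefore every ordered pair $(v,u)$ is joined by a directed $a,b,c$-path of length exactly $12n$, which proves \eqref{eq:directeddiameter}.

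I expect no real obstacle. The points to handle carefully are the orientation of the entries, so that positivity corresponds to paths following the edge directions, and the fact that the contraction must be applied to $Q_n$ itself rather than to $P_n$. The crude ratio $\log_2(3/(2\sqrt2))\approx0.085$ explains why the constant is about $12$ rather than $2$.
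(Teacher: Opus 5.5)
Your proposal is correct and takes essentially the same route as the paper: you show every entry of $Q_n^{12n}$ is positive, using the contraction \eqref{eq:directed} on $H_n$, submultiplicativity, and the check $(8/9)^6<1/2$, and your orientation of entries versus directed paths is right. The only difference is cosmetic: the paper writes $\|Q_n^t-J\|=\|(Q_n-J)^t\|\le\rho^t$ with $J=N^{-1}\one\one^{\mathsf T}$ and bounds each entry by this operator norm to get $(Q_n^t)_{vu}\ge 1/N-\rho^t$, whereas your centered vectors $f_u,f_v$ give the marginally sharper bound $(1-\rho^t(N-1))/N$.
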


\begin{proof}
Use $Q_n=N_n/3$, $N=2^n$, $J=N^{-1}\one\one^{\mathsf T}$,
and $\rho=2\sqrt2/3$. The matrix $Q_n$ is doubly stochastic: its
row sums and column sums are one. Hence $Q_nJ=JQ_n=J$.
The contraction estimate \eqref{eq:directed} gives, for every integer
$t\geq1$,
\[
                 \norm{Q_n^t-J}
                   =\norm{(Q_n-J)^t}\leq\rho^t.
\]
Every matrix entry therefore satisfies
\[
                         (Q_n^t)_{vu}\geq\frac1N-\rho^t.
\]
The exact inequality
\[
           \rho^{12}=\left(\frac89\right)^6
                       =\frac{262144}{531441}<\frac12
\]
shows that $(Q_n^{12n})_{vu}>0$ for every ordered pair $u,v$.
With our column convention this is a directed walk of length $12n$
from $u$ to $v$. The directed diameter is at most $12n$.
\end{proof}

\subsection{Lower bounds and the order of growth}

\begin{corollary}[The diameters have linear order]\label{cor:diameterorder}
Under the Bellaterra estimate \eqref{eq:input}, for every $n\geq1$,
\begin{align}
 \log_2\!\left(\frac{2^n+2}{3}\right)
    &\leq\operatorname{diam}(B_n)\leq2n+2,
       \label{eq:bellaterralower}\\
 \log_5\!\left(\frac{2^{n+1}+1}{3}\right)
    &\leq\operatorname{diam}(A_n)\leq2n+2,
       \label{eq:aleshinlower}\\
 \log_3(2^{n+1}+1)-1
    &\leq\operatorname{diam}_{\mathrm{dir}}(A_n)\leq12n.
       \label{eq:directedlower}
\end{align}
In particular, all three diameters are $\Theta(n)$, equivalently
$\Theta(\log |V_n|)$.
\end{corollary}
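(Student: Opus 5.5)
The upper bounds in all three lines are already established: Theorem~\ref{thm:diameter} gives $2n+2$ for $B_n$ and $A_n$, and Proposition~\ref{prop:directeddiameter} gives $12n$ for the directed diameter. What remains is the lower bounds. For these I would use a Moore-type ball-counting argument: a graph in which every ball of radius $D$ around a fixed vertex covers all $2^n$ vertices must have $2^n\le |\text{ball}(D)|$. Each ball size is bounded by a geometric sum that depends on the branching available at each step.

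For the Bellaterra graph $B_n$, each vertex has three edges (counting fixed points). A walk of reduced form never uses the same involution twice in a row, since $\beta_s^2=1$. So the number of vertices reachable in exactly $j\ge1$ steps is at most $3\cdot2^{j-1}$. The ball of radius $D$ therefore has at most
\[
1+3(1+2+\dots+2^{D-1})=3\cdot2^D-2
\]
elements. Setting $2^n\le 3\cdot 2^D-2$ gives $D\ge\log_2((2^n+2)/3)$, which is \eqref{eq:bellaterralower}.

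For the undirected Aleshin graph $A_n$, there are six generators $a^{\pm1},b^{\pm1},c^{\pm1}$. In a reduced word, a letter is never followed by its own inverse, so each step after the first has at most $5$ choices. The sphere of radius $j\ge1$ then has at most $6\cdot5^{j-1}$ points, and the ball has at most
\[
1+6\,\frac{5^D-1}{4}=\frac{3\cdot5^D-1}{2}.
\]
From $2^n\le(3\cdot 5^D-1)/2$ we get $5^D\ge(2^{n+1}+1)/3$, which is \eqref{eq:aleshinlower}. For the directed graph, only $a,b,c$ are allowed, so there are at most $3^j$ words of length $j$. The ball has at most $(3^{D+1}-1)/2$ points, so $3^{D+1}\ge 2^{n+1}+1$, giving \eqref{eq:directedlower}.

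The final claim follows immediately: each lower bound is $n\log_k2-O(1)$ for $k=2,5,3$ respectively, while each upper bound is $O(n)$. Since $|V_n|=2^n$, this gives $\Theta(n)=\Theta(\log|V_n|)$.

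The only delicate point is justifying the branching counts under the paper's multigraph conventions. Fixed points, loops and coinciding generators can only reduce the number of distinct endpoints, so the counts are upper bounds. Any shortest path corresponds to a reduced word: a backtrack $ss^{-1}$, or $\beta_s\beta_s$, could be deleted to give a shorter path. Both facts are routine, but the argument needs them to be valid.
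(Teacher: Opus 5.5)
Your proposal is correct and follows essentially the paper's argument: the same Moore-type ball counts $3\cdot2^D-2$, $(3\cdot5^D-1)/2$ and $(3^{D+1}-1)/2$, combined with the upper bounds from Theorem~\ref{thm:diameter} and Proposition~\ref{prop:directeddiameter}. The only difference is presentational. You justify the factor $d-1$ by free reduction of the generator word labelling a shortest path, while the paper observes that a shortest path never returns to the previous vertex; also, your phrase ``reachable in exactly $j$ steps'' should read ``at distance exactly $j$''.
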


\begin{proof}
In an undirected graph with at most $d\geq3$ neighbors at each vertex,
a ball of radius $D$ contains at most
\[
                         1+d\sum_{j=0}^{D-1}(d-1)^j
\]
vertices: there are at most $d$ choices for the first step of a
shortest path, and at most $d-1$ choices for each later step.
Loops and multiple edges cannot increase the number of distinct
vertices reached. For Bellaterra, $d=3$, so a graph of diameter $D$
has at most $3\cdot2^D-2$ vertices. For undirected Aleshin, $d=6$,
giving at most $(3\cdot5^D-1)/2$ vertices. Substituting $2^n$ for the
number of vertices and solving for $D$ gives the first two lower bounds.

The directed Aleshin graph has three outgoing edges at each vertex.
At most $1+3+\cdots+3^D=(3^{D+1}-1)/2$ vertices can be reached
by directed paths of length at most $D$. This gives the third lower
bound. Combine these estimates with
Theorem~\ref{thm:diameter} and Proposition~\ref{prop:directeddiameter}.
Here $\Theta(n)$ means that the quantity is bounded above and below
by positive constant multiples of $n$ for all sufficiently large $n$.
\end{proof}

% Section 5. Requires the main note's theorem environments and macros.
% External label: thm:expansion. No additional bibliography entries.

\section{The Aleshin graphs are not Ramanujan from level ten}
\label{sec:nonramanujan}

A connected $d$-regular graph is \emph{Ramanujan} if every adjacency
eigenvalue other than the trivial values $d$ and, when present, $-d$
has absolute value at most $2\sqrt{d-1}$. We use this same spectral
definition for the multigraphs in the note, with loops and edge
multiplicities counted in their adjacency matrices. For the
degree-six Aleshin graphs, the Ramanujan threshold is $2\sqrt5$.
The expansion bound $4\sqrt2$ from Theorem~\ref{thm:expansion}
is larger than this threshold. The following argument exhibits an
eigenvalue above $2\sqrt5$ at level ten and carries it to every
higher level.

\subsection{Eigenvalues persist under passage to higher levels}

For $n\geq m$, let $\pi_{n,m}:V_n\to V_m$ delete the last $n-m$
letters, and define the pullback of functions by
\[
 (L_{n,m}f)(w)=f(\pi_{n,m}(w)).
\]
Each word of length $m$ has exactly $2^{n-m}$ extensions of length
$n$, so
\begin{equation}\label{eq:level-pullback-norm}
 \|L_{n,m}f\|^2=2^{n-m}\|f\|^2,
 \qquad
 \sum_{w\in V_n}(L_{n,m}f)(w)
       =2^{n-m}\sum_{v\in V_m}f(v).
\end{equation}
In particular, $L_{n,m}$ is injective and carries $H_m$ into $H_n$.

\begin{lemma}[Persistence of level eigenvalues]
\label{lem:level-eigenvalues}
For $n\geq m$,
\begin{equation}\label{eq:level-intertwining}
 K_nL_{n,m}=L_{n,m}K_m.
\end{equation}
Consequently, every eigenvalue of $K_m|_{H_m}$ is an eigenvalue
of $K_n|_{H_n}$.
\end{lemma}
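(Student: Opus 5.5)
The intertwining identity \eqref{eq:level-intertwining} will come from the fact that tree automorphisms commute with truncation. I would then deduce persistence of eigenvalues from injectivity of $L_{n,m}$ together with \eqref{eq:level-pullback-norm}.

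First, for any automorphism $g$ of the rooted tree $X^*$ and any word $w\in V_n$, the prefix of length $m$ of $g(w)$ is $g(\pi_{n,m}(w))$. Tree automorphisms preserve the prefix relation, or equivalently each output letter depends only on the input letters read so far. Hence
\[
 \pi_{n,m}\circ g=g\circ\pi_{n,m}\quad\text{on }V_n,
\]
where on the right $g$ denotes the level-$m$ permutation. I would apply this with $g\in\{a,b,c,a^{-1},b^{-1},c^{-1}\}$.

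Second, I translate this into matrices. Since $P_se_w=e_{s(w)}$, we have $(P_sf)(v)=f(s^{-1}(v))$. Then
\[
 (P_sL_{n,m}f)(w)=f\bigl(\pi_{n,m}(s^{-1}w)\bigr)=f\bigl(s^{-1}\pi_{n,m}(w)\bigr)=(L_{n,m}P_sf)(w).
\]
Summing over the six generators $s\in\{a,b,c,a^{-1},b^{-1},c^{-1}\}$, with $K_n=\sum_s P_s$ by \eqref{eq:adjacency} and $P_s^*=P_{s^{-1}}$, gives $K_nL_{n,m}=L_{n,m}K_m$.

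Third, suppose $f\in H_m$ is nonzero with $K_mf=\lambda f$. Then $K_n(L_{n,m}f)=\lambda L_{n,m}f$. By \eqref{eq:level-pullback-norm}, $L_{n,m}f$ is nonzero and lies in $H_n$, so $\lambda$ is an eigenvalue of $K_n|_{H_n}$.

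The only point needing care is the direction convention: whether $P_s$ acts by $s$ or $s^{-1}$ on functions. It is harmless here, because the generating list is closed under inversion, and the commutation holds for each $s$ separately in either convention.
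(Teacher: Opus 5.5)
Your proposal is correct and follows the paper's proof essentially step for step. Prefix preservation gives $\pi_{n,m}\circ s=s\circ\pi_{n,m}$; the formula $(P_sf)(w)=f(s^{-1}(w))$ turns this into $P_sL_{n,m}=L_{n,m}P_s$ for each of the six generators; and \eqref{eq:level-pullback-norm} ensures that $L_{n,m}f$ is a nonzero vector in $H_n$, so eigenvectors carry over from $H_m$ to $H_n$.
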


\begin{proof}
A rooted-tree automorphism preserves prefixes. Thus, for each
$s\in\{a,b,c,a^{-1},b^{-1},c^{-1}\}$,
\[
 \pi_{n,m}(s(w))=s(\pi_{n,m}(w)).
\]
Using $(P_sf)(w)=f(s^{-1}(w))$, this gives
$P_sL_{n,m}=L_{n,m}P_s$, where the two occurrences of $P_s$ act
at their respective levels. Summing over the six generators proves
\eqref{eq:level-intertwining}. If $K_mf=\lambda f$ with
$0\ne f\in H_m$, then $0\ne L_{n,m}f\in H_n$ by
\eqref{eq:level-pullback-norm}, and
$K_nL_{n,m}f=\lambda L_{n,m}f$.
\end{proof}

\subsection{An exact certificate at level ten}

\begin{theorem}\label{thm:aleshin-nonramanujan}
For every $n\geq10$, the undirected Aleshin graph $A_n$ is not
Ramanujan. More precisely, its adjacency matrix has a nonconstant
eigenvalue $\lambda$ satisfying
\[
 \lambda\geq
 \frac{572251333403184}{127833554901040}
 >\frac{179}{40}>2\sqrt5.
\]
\end{theorem}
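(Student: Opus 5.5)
The plan is to exhibit, at level $m=10$, an explicit rational test vector $f\in H_{10}$ whose Rayleigh quotient for $K_{10}$ is at least the stated fraction. Lemma~\ref{lem:level-eigenvalues} then carries the conclusion to every $n\geq10$. Since $K_{10}$ is self-adjoint on $H_{10}$, the Courant--Fischer principle gives
\[
 \lambda_{\max}\bigl(K_{10}|_{H_{10}}\bigr)\geq\frac{\langle f,K_{10}f\rangle}{\langle f,f\rangle}
\]
for any nonzero $f\in H_{10}$. So a single vector, with integer coordinates chosen to sum to zero, certifies an eigenvalue $\lambda$ of $K_{10}|_{H_{10}}$ with $\lambda$ at least this quotient. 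The numerator and denominator of the stated fraction, $572251333403184/127833554901040$, should then be exactly $\langle f,K_{10}f\rangle$ and $\|f\|^2$, or these values scaled by a common factor. This makes the certificate a finite integer computation.

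Concretely, I would first build $K_{10}$ from the wreath recursions \eqref{eq:aleshin}, as a sparse $1024\times1024$ integer matrix with at most six nonzero entries per row, counting loops and multiplicities as in Section~1. Next I would compute numerically the top eigenvector of $K_{10}$ on $H_{10}$, for instance by power iteration on $K_{10}+6I$ after projecting away the constants. I would scale this eigenvector and round it to integers, then subtract a correction so the coordinates sum exactly to zero. Finally, I would evaluate $\langle f,K_{10}f\rangle=\sum_{w}\sum_{s}f(w)f(s(w))\cdot 2$ over the three generators, together with $\|f\|^2$, in exact integer arithmetic.

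The remaining inequalities are elementary. To check that the fraction exceeds $179/40$, I would cross-multiply: $40\cdot572251333403184>179\cdot127833554901040$. For the last inequality, $(179/40)^2=32041/1600>20$ because $32041>32000$, so $179/40>2\sqrt5$.

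For the passage to $n\geq10$, apply Lemma~\ref{lem:level-eigenvalues} with $m=10$. The eigenvalue $\lambda$ of $K_{10}|_{H_{10}}$ is also an eigenvalue of $K_n|_{H_n}$, so it is nonconstant. Since $\lambda<6$ and $\lambda>0$, it is neither of the trivial values $\pm6$. Also, $A_n$ is connected by Theorem~\ref{thm:expansion}, so the Ramanujan definition applies, and this eigenvalue violates it.

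The main obstacle is producing a certificate that can be verified reproducibly in a written proof. The paper cannot list 1024 integers comfortably, so I would try first to exploit structure. One option is to look for the extremal eigenvector inside a small invariant subspace. Good candidates are functions pulled back from a lower level, which are excluded here because levels below ten are presumably Ramanujan, or functions with a symmetry under $\delta_{10}$. Failing that, the fallback is to describe $f$ by a short rule, or as the output of a specified deterministic rounding procedure. The exact integers $\langle f,K_{10}f\rangle$ and $\|f\|^2$ would be stated so that any reader can recheck them mechanically.
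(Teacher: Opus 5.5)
Your argument has the same skeleton as the paper's: an exact Rayleigh-quotient certificate in $H_{10}$, transport to all $n\geq10$ by Lemma~\ref{lem:level-eigenvalues}, and nontriviality from Theorem~\ref{thm:expansion}. The genuine difference is how the test vector is built.

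The paper does not round a numerically computed eigenvector. It starts from the parity vector $\chi(x_1\cdots x_{10})=(-1)^{x_1+\cdots+x_{10}}$ and runs the recursion $u_{j+1}=Ku_j-5u_{j-1}$ up to $u_{13}$. On an eigencomponent with eigenvalue $x$, this multiplies by $5^{j/2}U_j\bigl(x/(2\sqrt5)\bigr)$, where $U_j$ is the Chebyshev polynomial of the second kind. It therefore amplifies components with $|x|>2\sqrt5$ relative to those in $[-2\sqrt5,2\sqrt5]$. The paper then takes $v=\frac14(K+4I)u_{13}$ to damp the negative end of the spectrum, which the recursion amplifies too. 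Both $\langle v,v\rangle$ and $\langle v,Kv\rangle$ are then fixed linear combinations of just four integer moments $\mu_j=\langle u_{13},K^ju_{13}\rangle$, $0\leq j\leq3$.

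This resolves exactly what you call the main obstacle. The certificate is specified in two lines, involves no numerical step, and can be checked mechanically without publishing $1024$ coordinates or a rounding rule.

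It also explains the stated constant. The fraction $572251333403184/127833554901040$ is precisely $q/d$ for this particular $v$, so your expectation that a rounded top eigenvector would reproduce it up to scaling is mistaken. Your vector would give a different, typically larger, quotient, and you would then have to cross-multiply against the stated fraction. That comparison does succeed for fine enough rounding. Since $q/d$ is a non-integer rational, it is not an algebraic integer, hence not an eigenvalue of the integer matrix $K_{10}$. So the top eigenvalue of $K_{10}|_{H_{10}}$ strictly exceeds $q/d$. Your route buys a sharper constant at the cost of a bulky certificate.

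Finally, your $\delta_{10}$-symmetry idea is implicit in the paper's choice, though the paper does not say so. We have $D_{10}\chi=\chi$ because $10$ is even, and $K_n=D_nM_n+M_nD_n$ commutes with $D_n$. Hence the whole computation stays in the $D_{10}$-fixed subspace.
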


\begin{proof}
We give a finite certificate consisting entirely of integer matrix
recursions and exact inner products. Order the words at each level
lexicographically, with words starting with $0$ before those
starting with $1$. Let $\mathsf A_m,\mathsf B_m,\mathsf C_m$
be the column permutation matrices of $a,b,c$ on $V_m$.
At level zero they are all the $1\times1$ matrix $(1)$.
The wreath recursions \eqref{eq:aleshin} give
\begin{equation}\label{eq:level10-blocks}
 \mathsf A_{m+1}=
 \begin{pmatrix}0&\mathsf B_m\\ \mathsf C_m&0\end{pmatrix},
 \quad
 \mathsf B_{m+1}=
 \begin{pmatrix}0&\mathsf C_m\\ \mathsf B_m&0\end{pmatrix},
 \quad
 \mathsf C_{m+1}=
 \begin{pmatrix}\mathsf A_m&0\\0&\mathsf A_m\end{pmatrix}.
\end{equation}
Thus the symmetric $1024\times1024$ integer matrix at level ten is
\[
 K=K_{10}
 =\mathsf A_{10}+\mathsf B_{10}+\mathsf C_{10}
  +\mathsf A_{10}^{\mathsf T}
  +\mathsf B_{10}^{\mathsf T}
  +\mathsf C_{10}^{\mathsf T}.
\]

Let $\chi\in\mathbb Z^{V_{10}}$ be the parity vector
\[
 \chi(x_1\cdots x_{10})=(-1)^{x_1+\cdots+x_{10}}.
\]
It has coordinate sum zero. Define
\begin{equation}\label{eq:level10-vector-recursion}
 u_0=\chi,\qquad u_1=K\chi,\qquad
 u_{j+1}=Ku_j-5u_{j-1}\quad(1\leq j\leq12),
 \qquad v=\frac14(K+4I)u_{13}.
\end{equation}
Since $K$ preserves $H_{10}$, all the $u_j$ and $v$ lie in
$H_{10}$. Their construction is explicit from
\eqref{eq:level10-blocks} and \eqref{eq:level10-vector-recursion}.

The four inner products needed to evaluate the Rayleigh quotient
of $v$ are as follows. Put $\mu_j=\langle u_{13},K^ju_{13}\rangle$.
Successive application of the displayed integer recursions gives
\begin{equation}\label{eq:level10-moments}
\begin{array}{c|r}
 j&\mu_j\\\hline
 0&29\,696\,431\,560\,960\\
 1&122\,512\,824\,426\,880\\
 2&590\,091\,378\,026\,240\\
 3&2\,475\,085\,119\,410\,944
\end{array}
\end{equation}
These are exact integers. Expanding $(K+4I)^2$ and using the
symmetry of $K$ now gives
\begin{align}
 d:=\langle v,v\rangle
 &=\frac{\mu_2+8\mu_1+16\mu_0}{16}
   =127\,833\,554\,901\,040,
       \label{eq:level10-denominator}\\
 q:=\langle v,Kv\rangle
 &=\frac{\mu_3+8\mu_2+16\mu_1}{16}
   =572\,251\,333\,403\,184.
       \label{eq:level10-numerator}
\end{align}
In particular $v\ne0$. The strict comparison with the Ramanujan
threshold follows from the two integer inequalities
\begin{gather*}
 40q-179d=7\,847\,008\,841\,200>0,\\
 179^2-20\cdot40^2=41>0.
\end{gather*}
They imply
\[
 \frac{\langle v,Kv\rangle}{\langle v,v\rangle}
 =\frac qd>\frac{179}{40}>\sqrt{20}=2\sqrt5.
\]

To see why this proves an eigenvalue bound, expand $v$ in an
orthonormal eigenbasis of the real symmetric operator $K|_{H_{10}}$.
Its Rayleigh quotient is a weighted average of the corresponding
eigenvalues, with nonnegative weights summing to one. At least one
eigenvalue is therefore at least $q/d$. Theorem~\ref{thm:expansion}
places every eigenvalue on $H_{10}$ strictly below $6$, so the
eigenvalue just obtained is nontrivial. This proves that $A_{10}$
is not Ramanujan.

Finally, Lemma~\ref{lem:level-eigenvalues} carries this same
eigenvalue to $K_n|_{H_n}$ for every $n\geq10$. It remains strictly
between $2\sqrt5$ and $6$ and hence violates the degree-six
Ramanujan bound at every such level.
\end{proof}

The theorem establishes failure of the Ramanujan bound for every
level from ten onward. This is compatible with the uniform spectral
gap proved earlier: expansion requires a fixed separation from
$6$ and $-6$, whereas the Ramanujan condition imposes the smaller
interval $[-2\sqrt5,2\sqrt5]$ on the nontrivial spectrum.

\section{Tools used.}

The paper was written with ChatGPT 6 Pro editing assistance.
The proof of the fact that the Aleshin graphs are not Ramanujan, Section 5.2,
was done with ChatGPT 6 Pro assistance.

The content of the paper was carefully checked by the authors
and the authors carry the full responsibility of the content of the paper.

\end{document}